\newcommand{\mb}[1]{\mathbf{#1}}
\newcommand{\bb}[1]{\mathbb{#1}}
\newcommand{\pard}[2]{\frac{\partial #1}{\partial #2}}
\newcommand{\mbf}[1]{\mathbf{#1}}
\newcommand{\ho}{\left(\frac{d}{dt} -\Delta \right)}
\newcommand{\ddt}[1]{\frac{ d #1}{dt}}
\newcommand{\ip}[2]{\left \langle #1 , #2 \right\rangle}
\newcommand{\n}{\nabla}
\renewcommand{\l}{\lambda}
\renewcommand{\a}{\alpha}
\newcommand{\Si}{\Sigma}
\newcommand{\ra}{\rightarrow}
\newcommand{\ov}{\overline}
\newcommand{\mt}{{\mu^\top}}
\newcommand{\ns}{{\nu^\Si}}
\begin{document}
\theoremstyle{plain}
\newtheorem{theorem}{Theorem}
\newtheorem{lemma}[theorem]{Lemma}
\newtheorem{claim}[theorem]{Claim}
\newtheorem{proposition}[theorem]{Proposition}
\newtheorem{cor}[theorem]{Corollary}
\theoremstyle{definition}
\newtheorem{defses}{Definition}
\newtheorem{assumption}{Assumption}
\theoremstyle{remark}
\newtheorem{remark}{Remark}

\title[A remark on gradient estimates]{A remark on gradient estimates for spacelike mean curvature flow with boundary conditions}
\author{Ben Lambert}
\address{Zukunftskolleg, University of Konstanz, Box 216, 78457 Konstanz - Germany}
\email{benjamin.lambert@uni-konstanz.de}
\maketitle

\begin{abstract}
 We prove a gradient estimate for graphical spacelike mean curvature flow with a general Neumann boundary condition in dimension $n=2$. This then implies that the mean curvature flow exists for all time and converges to a translating solution. 
\end{abstract}

\begin{center}
 \emph{Mathematics Subject Classification: 53C44, 53C50, 35K59, 35K20}
\end{center}

\section{Introduction}

In this paper we obtain gradient estimates for mean curvature flow (MCF) with general Neumann boundary angle conditions in Minkowski space for dimension $n=2$, leading to existence of the flow for all time and convergence to a translating solution.

In Euclidean space this problem is well studied. In dimension $n=2$, S. Altschuler and L. Wu \cite{AltschulerWu} demonstrated the Euclidean counterpart of this paper demonstrating that graphical MCF with fixed boundary angles exists for all time and converges to a translating solution. Further gradient estimates were also obtained in higher dimensions by B. Guan \cite{BoGuan} demonstrating long time existence, although these depend on the height of the graph and so are not suitable for convergence of the flow, and further estimates for graphs over killing vector fields have been obtained by J. Lira and G. Wanderly \cite{LiraWanderly}. Further results on gradient estimates in Euclidean space have been obtained by G. Huisken \cite{huiskengraph}, A. Stahl \cite{Stahlfirst}, V. Wheeler \cite{WheelerHalfspace}\cite{WheelerRotSym} and the author \cite{LambertTorus}.  

In semi-Riemannian spaces, K. Ecker and G. Huisken \cite{EckerHuiskenCMCMink} demonstrated that MCF (and related flows) may be used to construct prescribed mean curvature hypersurfaces surfaces and in higher codimensions G. Li and I. Salavessa \cite{LiSalavessa} showed that MCF may be applied to find when mappings between Riemannian manifolds are topologically trivial (under some curvature conditions). The Dirichlet boundary value problem for such flows in codimension 1 has been studied by K. Ecker \cite{EckerMinkowskiDBC}\cite{EckerNull}. The perpendicular Neumann boundary condition was considered by the author in several settings \cite{LambertMinkowski}\cite{LambertConstruction}\cite{Lambertgraph}. A recent article by G. Li, B. Gao and C. Wu\cite{GaoLiWu} dealt exactly with the problem of general graphical angle conditions described below for general dimension $n$, however the key boundary gradient lemma in this paper is incorrect. Specifically equation (2.9) in that paper appears to come from differentiating the boundary condition in the normal direction into the domain where no such boundary condition holds. Here the author uses methods similar to Altschuler and Wu's \cite{AltschulerWu} to provide an alternative proof for this result in the restricted case of $n=2$.

Let $\Omega\subset \bb{R}^n$ be a compact convex domain with smooth boundary $\partial \Omega$, where we will take $\bb{R}^n\subset \bb{R}^{n+1}_1$ to be perpendicular to the vector $e_{n+1}$ where $\ip{e_{n+1}}{e_{n+1}}=-1$.  Let $\Sigma$ be the cylinder over $\partial \Omega$ in the direction $\mb{e}_{n+1}$. Define $\gamma$ to be both the outward pointing unit normal to $\partial \Omega$ and  $\mu$ to be the extension of this to the the outward unit normal to $\Sigma$.

Let $M^n$ be an n-dimensional disk with boundary $\partial M^n$ and $\mathbf{F}: M^n \times [0,T) \rightarrow \mathbb{R}^{n+1}_1$ be a smooth map such that $\mbf{F}(\cdot,t)$ is a spacelike embedding of $M^n$ into Minkowski space for all $t \in [0,T)$. Let $\alpha:\Sigma \ra \bb{R}$ be a smooth function which will be used to prescribe the boundary angle. Suppose we are given the spacelike smooth initial embedding $\mathbf{F}_0:M^n \rightarrow \mathbb{R}^{n+1}_1$ then $\mbf{F}$ moves by MCF with $\alpha$-Neumann boundary angle condition if
\begin{equation}
\label{mac}
\begin{cases}
\ip{\frac{d \mathbf{F}}{d t}(p,t)}{\nu(p,t)} = -H (p,t) & \forall (p,t) \in M^n \times [0,T)\\
\mathbf{F}(p,0) = \mathbf{F}_0(p) & \forall p \in M^n\\
\mathbf{F}(p,t) \subset \Sigma & \forall (p,t) \in \partial M \times [0,T)\\
\ip{\nu(p, t)}{ \mu(\mb{F}(p, t)} = \alpha(\mb{F}(p,t)) & \forall (p,t) \in \partial M \times [0,T)
\end{cases}
\end{equation}
where $\nu$ is a smooth normal to the embedding of $M_t=\mb{F}(M^n,t)$. We will assume from now on that $\mb{F}_0$ is smooth and satisfies compatibility conditions, namely that for $p\in\partial M^n$,  $\mb{F}_0(p) \in \Sigma$ and $\ip{\nu(p)}{\mu(F_0(p)}=\alpha$. We remark that the inner product formulation of the first line of (\ref{mac}) is necessary as otherwise (in general) we would require the boundary of $M^n$ to vary with time, as with the usual formulation of MCF, the parametrisation would flow ``out of'' the interior of $\Sigma$.   

We will write $\nabla$ for the connection on $M_t$, $\nabla^\Si$ for the connection on $\Sigma$, and the ambient connection on $\bb{R}^{n+1}_1$ will be denoted $\ov \n$. The second fundamental form on $M$ and $\Sigma$ will be written $A(X,Y) =: \ip{\ov \n_X \nu}{Y}$ and $A^\Si(X,Y) = \ip{\ov\n_X \mu}{Y}$ respectively. We observe that when $n=2$, since $\Sigma$ is a cylinder, $A^\Si(X,Y) = \ip{X + \ip{X}{e_{n+1}}e_{n+1}}{Y + \ip{Y}{e_{n+1}}e_{n+1}}\kappa$ where $\kappa$ is the curvature of the curve defined by $\partial \Omega\subset \bb{R}^2$. We will say that $\partial \Omega\subset \bb{R}^2$ is strictly convex if $\kappa>0$.  

We will say $\alpha$ is a \emph{graphical} boundary angle if for all $p\in\Sigma$, $ \n_{e_{n+1}}^\Si \alpha|_p=0$, that is the boundary angle does not vary in the $e_{n+1}$ direction. If $\mb{F}_0$ is spacelike then we may represent $\mbf{F}_0$ as a graph $u_0: \Omega \rightarrow \mathbb{R}$ initially with the derivative bound $|Du_0|<1$. If in addition $\alpha$ is graphical, equation (\ref{mac}) is equivalent (by an argument identical to \cite[Section 1]{EckerHuiskenInteriorEstimates}) to finding $u:\Omega \times [0,T)\ra \bb{R}$ such that
\begin{equation}
\label{parmac}
\begin{cases}
u_t= \sqrt{1-|Du|^2} D_i \left( \frac{ D_i u }{\sqrt{1- |Du|^2}} \right) & \forall ({x},t) \in \Omega \times [0,T)\\
u({x}, 0)=u_0({x}) & \forall {x} \in \Omega\\
\gamma^i D_iu ({x},t) = \sqrt{1-|Du|^2}\alpha(x,t) & \forall ({x},t) \in \partial \Omega \times [0,T)\ \ .
\end{cases}
\end{equation}

We define a translating solution to (\ref{mac}) to be one which stays the same up to reparametrisation and translation depending on time. This may be viewed as a solution of (\ref{parmac}) of the form $\tilde u(x,t) = \tilde{u}(x,0) +\l t$ for some $\l$. 

A key ingredient to demonstrating uniform parabolicity to equation (\ref{parmac}) is finding a gradient estimate, such that there is a constant $C$ depending only on the initial data, $\alpha$ and $\Omega$ such that $|Du|(x,t)\leq C<1$ for all the time the flow exists. Equivalently we require an upper estimate on
\[v:= -\ip{\nu}{e_{n+1}} = \frac{1}{\sqrt{1-|Du|^2}}\ \ .\]
We obtain this estimate in Proposition \ref{Gradest}, and as a corollary we obtain the following:

\begin{theorem}
 Suppose $\Omega$ is a smooth strictly convex domain, and the boundary angle prescription function $\alpha$ is graphical. Then any solution to equation (\ref{mac}) starting from smooth spacelike initial data exists for all time and converges uniformly to a translating solution as $t\ra \infty$.  
\end{theorem}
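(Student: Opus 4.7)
The plan is to take Proposition \ref{Gradest} as a black box giving the uniform estimate $|Du|\le C<1$ and reduce the theorem to standard parabolic regularity plus a convergence argument in the style of Altschuler--Wu. First, the gradient bound renders equation (\ref{parmac}) uniformly parabolic with a uniformly oblique boundary condition (since $\gamma^i D_i u = \sqrt{1-|Du|^2}\alpha$ is linear in $Du$ with $\gamma\cdot\gamma=1$ and the coefficient of $v^{-1}$ is $\alpha$, and graphicality ensures $\alpha=\alpha(x)$). Then Krylov--Safonov together with Lieberman's boundary estimates for oblique problems give uniform $C^{2,\beta}$ bounds on $u-u(x_0,t)$ for some $\beta>0$, and bootstrapping via parabolic Schauder theory yields uniform $C^{k,\beta}$ bounds for all $k$. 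This provides the a priori estimates needed to continue the flow past any finite time, establishing long time existence.

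For convergence, I would set $w:=u_t$ and differentiate (\ref{parmac}) in time. Since the operator on the right is quasilinear in $(u,Du,D^2u)$ with no explicit $u$ dependence, $w$ satisfies a linear, uniformly parabolic equation of the form $w_t = a^{ij}(x,t)D_{ij}w + b^i(x,t)D_iw$. Differentiating the boundary condition and using that $\alpha$ is graphical (so $\alpha$ is independent of $u$) produces a homogeneous linear oblique boundary condition of the form $\gamma^i D_i w = c(x,t)\, w$ where $c$ comes from differentiating $\sqrt{1-|Du|^2}\alpha$. The maximum principle for oblique linear parabolic problems then implies that $\sup_{\Omega}w(\cdot,t)$ is nonincreasing and $\inf_{\Omega}w(\cdot,t)$ is nondecreasing in $t$, so both limits $\lambda^+\ge\lambda^-$ exist and are finite.

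The main step is then to upgrade this to $\lambda^+=\lambda^-=:\lambda$. The cleanest route is via a compactness/translation argument: pick a sequence $t_k\to\infty$, consider the shifted flow $u_k(x,t):=u(x,t+t_k)-u(x_0,t_k)$. The uniform $C^{k,\beta}$ estimates let us extract a smooth subsequential limit $u_\infty(x,t)$ solving (\ref{parmac}) on $\Omega\times\mathbb{R}$. Along this limit, $(u_\infty)_t$ attains both $\lambda^+$ and $\lambda^-$ as interior or boundary values (by choosing subsequences realizing $\sup$ and $\inf$ in the limit), so by the strong maximum principle and the boundary point lemma for oblique problems (where strict convexity of $\partial\Omega$, via $A^{\Sigma}\ne 0$ on horizontal directions, guarantees the Hopf condition is meaningful) one concludes $(u_\infty)_t\equiv\lambda$ for a single constant $\lambda$. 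Hence $\lambda^+=\lambda^-=\lambda$, $u_\infty$ is a translating solution, and the oscillation of $u(\cdot,t)-\lambda t$ tends to zero, which combined with the uniform higher order estimates upgrades subsequential convergence to full uniform convergence.

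The main obstacle I anticipate is ensuring that the strong maximum principle/Hopf lemma applies at the boundary for the derived oblique problem on $w$; this is where strict convexity of $\Omega$ enters a second time (beyond its role in Proposition \ref{Gradest}), ensuring that $\partial\Omega$ satisfies an interior sphere condition and that the boundary operator on $w$ is genuinely oblique. Everything else is routine linear parabolic theory applied to a problem made tractable by the gradient bound.
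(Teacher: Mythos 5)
Your proposal is correct and follows essentially the same route as the paper: the gradient estimate of Proposition \ref{Gradest} yields uniform parabolicity and obliqueness, standard parabolic regularity gives long-time existence (the paper makes the height control explicit via $|u_t|=|H|v^{-1}\le C_H$ from Lemma \ref{Hest}), and convergence comes from the strong maximum principle argument on $w=u_t$ that the paper outsources to Schn\"urer, which you have written out. One small correction: differentiating the boundary condition in $t$ gives $\gamma^i D_i w = -\alpha(1-|Du|^2)^{-1/2}\,D_ju\,D_jw$, i.e.\ a \emph{homogeneous} oblique condition $\beta^i D_i w=0$ with $\beta\cdot\gamma=1+\alpha^2>0$ and no zeroth-order term, rather than $\gamma^i D_i w=c\,w$ with $c$ of unknown sign; it is this homogeneous form that makes the monotonicity of $\sup w$ and $\inf w$ immediate.
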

\begin{proof}
The boundary condition in equation (\ref{parmac}) is oblique, and as a result of Proposition \ref{Gradest}, the flow is uniformly parabolic with a uniform gradient estimate. Therefore, we may apply methods such as in \cite[Section VIII.3]{Lieberman} to obtain existence for as long as $|u|$ is bounded. We may get an explicit estimate on $|u|$ by observing that the first line in equation (\ref{parmac}) gives $u_t = Hv^{-1}$, and so due to Lemma \ref{Hest} we have $|u_t|<C_H$, immediately implying that at time $t$
\begin{equation}|u(x,t)| = C(u_0)+tC_H\ \ .
 \label{uest}
\end{equation}
We conclude that a solution to (\ref{parmac}) exists for all time.

Since we have a gradient estimate that is unform in time and a height bound of the form (\ref{uest}), both existence of a translating solution $\tilde{u}$ to (\ref{parmac}) and the convergence to $\tilde{u}$ may now be seen by following a strong maximum principle argument as in \cite[Section 6.2]{Schnuerersecond}. Here, we do not rewrite proof, as the arguments in \cite{Schnuerersecond} carry over with only trivial modifications. More precisely the only difference is that we obtain the initial linear equation and boundary condition for $w$ on bottom of p340 and top of p341 of \cite{Schnuerersecond} from (\ref{parmac}), which is quasilinear with a uniformly oblique boundary condition, meaning that an identical equation follows easily by standard methods. Otherwise the proof of existence of a translating solution and convergence to that solution is identical.
\end{proof}

\section{The boundary condition}

In this section we consider the effect of the condition
\[\ip{\nu}{\mu} = \alpha\]
where $\a\in C^\infty(\Sigma)$ and $|\n^\Si\a|<C_\Si$ and $\Si$ is strictly convex.
\begin{lemma}\label{spacederiv}
 For $p\in \Si$ and $W\in T \Si \cap T M_t$,
 \[\n^\Si_W \alpha = A(W,\mu^\top)+A^\Si(W,\nu^\Si)\]
\end{lemma}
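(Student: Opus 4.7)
The plan is to differentiate the defining identity $\langle \nu, \mu\rangle = \alpha$ along $W$ and extract the two second fundamental form terms from the orthogonal decompositions of $\mu$ and $\nu$ with respect to $M_t$ and $\Sigma$ respectively. Since $W$ is tangent to $\Sigma$, the left side is just $\nabla^\Sigma_W \alpha$, and compatibility of $\overline{\nabla}$ with the Minkowski metric gives
\[\nabla^\Sigma_W \alpha \;=\; \langle \overline{\nabla}_W \nu, \mu\rangle + \langle \nu, \overline{\nabla}_W \mu\rangle.\]

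First I would write down the decompositions. Since $M_t$ is spacelike, $\nu$ is timelike with $\langle \nu,\nu\rangle = -1$, so the projection of $\mu$ onto $TM_t$ is $\mu^\top = \mu + \alpha \nu$, i.e.\ $\mu = \mu^\top - \alpha \nu$. Since $\Sigma$ is timelike with $\mu$ spacelike ($\langle \mu,\mu\rangle=1$), the projection of $\nu$ onto $T\Sigma$ is $\nu^\Sigma = \nu - \alpha \mu$, i.e.\ $\nu = \nu^\Sigma + \alpha \mu$. These are the two decompositions I need to substitute into the two inner products above.

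Next I would use the unit-length identities $\langle \overline{\nabla}_W \nu,\nu\rangle = 0$ and $\langle \overline{\nabla}_W \mu,\mu\rangle = 0$ (which follow from differentiating $\langle \nu,\nu\rangle = -1$ and $\langle \mu,\mu\rangle = 1$) to kill the $\alpha$-terms. Explicitly,
\[\langle \overline{\nabla}_W \nu,\mu\rangle = \langle \overline{\nabla}_W \nu, \mu^\top\rangle - \alpha\langle \overline{\nabla}_W\nu,\nu\rangle = A(W,\mu^\top),\]
where the last equality uses $W \in TM_t$ together with the definition of $A$. Analogously,
\[\langle \nu, \overline{\nabla}_W\mu\rangle = \langle \nu^\Sigma,\overline{\nabla}_W\mu\rangle + \alpha\langle \mu,\overline{\nabla}_W\mu\rangle = A^\Sigma(W,\nu^\Sigma),\]
using $W \in T\Sigma$ and the definition of $A^\Sigma$. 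Summing these two identities gives exactly the stated formula.

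There is no real obstacle here; the one place where care is needed is the sign bookkeeping in the orthogonal decompositions, since the timelike versus spacelike character of $\nu$ and $\mu$ forces opposite signs in $\mu = \mu^\top - \alpha \nu$ versus $\nu = \nu^\Sigma + \alpha \mu$. Once this is pinned down, the two terms proportional to $\alpha$ vanish for the structural reason that derivatives of unit (timelike or spacelike) normals are orthogonal to themselves, and the lemma falls out immediately.
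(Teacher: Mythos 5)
Your proposal is correct and follows essentially the same route as the paper: the paper's proof is just the one-line computation $\nabla^\Sigma_W\alpha = W(\langle\nu,\mu\rangle) = A(W,\mu^\top)+A^\Sigma(W,\nu^\Sigma)$ (with the details deferred to the cited references), and your expansion via metric compatibility, the decompositions $\mu=\mu^\top-\alpha\nu$, $\nu=\nu^\Sigma+\alpha\mu$, and the vanishing of $\langle\overline{\nabla}_W\nu,\nu\rangle$ and $\langle\overline{\nabla}_W\mu,\mu\rangle$ is exactly the computation being suppressed. Your sign bookkeeping is consistent with the paper's conventions $X^\top=X+\langle X,\nu\rangle\nu$ and $X^\Sigma=X-\langle X,\mu\rangle\mu$.
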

\begin{proof}
 We calculate (see also \cite[Proposition 2.2]{Stahlsecond}\cite[Lemma 5.2]{LambertMinkowski})
 \[\n^\Si_W \a = W(\ip{\nu}{\mu}) = A(W,\mu^\top)+A^\Si(W,\nu^\Si)\]
\end{proof}
For $p\in \Sigma$ and $X\in T_p\bb{R}_1^{n+1}$, we define projections into $T_p M$, $T_p \Sigma$ and $T_p M \cap T_p \Sigma$ by
\[X^\top = X + \ip{X}{\nu}\nu, \ \  X^\Sigma = X - \ip{X}{\mu}\mu, \ \  X^\tau = X-\frac{\ip{X}{\mu}}{1+\alpha^2}\mu^\top+\frac{\ip{X}{\nu}}{1+\alpha^2}\nu^\Sigma\]
In particular we have 
 \begin{equation}e_{n+1} - v \nu = e_{n+1}^\top = e_{n+1}^\tau -\frac{v\alpha}{1+\alpha^2}\mt\label{linalg1}
 \end{equation}
 and
 \begin{equation}e_{n+1}^\tau =e_{n+1}+v\left( -\nu +\frac{\alpha}{1+\alpha^2}\mt\right) = e_{n+1}-\frac{v}{1+\alpha^2}\nu^\Si  \ \ .\label{linalg2}
 \end{equation}
We recall that $v:=-\ip{\nu}{e_{n+1}}$ where we choose the sign on $e_{n+1}$ so that $v>0$, and observe the following lemma:
\begin{lemma}\label{bdryv}
 At any point in $\Si\cap M_t$, we have
 \[  \n_\mt v =\frac{v}{1+\a^2}\left[\a A(\mt,\mt) -A^\Si(\nu^\Si, \nu^\Si)\right]-\n^\Si_{e_{n+1}^\tau} \alpha\ \ .\]
\end{lemma}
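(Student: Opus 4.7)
The plan is to differentiate $v=-\ip{\nu}{e_{n+1}}$ directly in the $\mu^\top$ direction, then trade the resulting second fundamental form term for a boundary-angle derivative via Lemma~\ref{spacederiv}, and finally clean up using the linear-algebra identities \eqref{linalg1} and \eqref{linalg2}.

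First I would observe that since $e_{n+1}$ is parallel in $\mbb{R}^{n+1}_1$ and $\ov\n_X\nu$ is tangent to $M_t$ (because $\ip{\nu}{\nu}=-1$), differentiating gives
\[\n_{\mt} v = -\ip{\ov\n_{\mt}\nu}{e_{n+1}} = -\ip{\ov\n_{\mt}\nu}{e_{n+1}^\top} = -A(\mt, e_{n+1}^\top).\]
Using \eqref{linalg1} to rewrite $e_{n+1}^\top = e_{n+1}^\tau - \frac{v\a}{1+\a^2}\mt$, this becomes
\[\n_{\mt} v = \frac{v\a}{1+\a^2}A(\mt,\mt) - A(\mt, e_{n+1}^\tau).\]

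Next, since $e_{n+1}^\tau \in T\Si \cap TM_t$, I would apply Lemma~\ref{spacederiv} with $W=e_{n+1}^\tau$, which yields
\[A(\mt, e_{n+1}^\tau) = \n^\Si_{e_{n+1}^\tau}\a - A^\Si(e_{n+1}^\tau,\ns).\]
To handle the last term, I would use \eqref{linalg2} together with the crucial observation that the cylinder $\Si$ has $e_{n+1}$ in the kernel of $A^\Si$ (visible from the explicit formula $A^\Si(X,Y)=\ip{X+\ip{X}{e_{n+1}}e_{n+1}}{Y+\ip{Y}{e_{n+1}}e_{n+1}}\kappa$ given before Lemma~\ref{spacederiv}). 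Hence
\[A^\Si(e_{n+1}^\tau, \ns) = A^\Si\!\left(-\tfrac{v}{1+\a^2}\ns,\ns\right) = -\tfrac{v}{1+\a^2}A^\Si(\ns,\ns).\]

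Substituting everything back gives
\[\n_{\mt}v = \frac{v\a}{1+\a^2}A(\mt,\mt) - \n^\Si_{e_{n+1}^\tau}\a - \frac{v}{1+\a^2}A^\Si(\ns,\ns),\]
which is precisely the claim after factoring. The computation is essentially linear-algebraic, so no step poses a real obstacle; the only thing to watch carefully is the Minkowski sign convention that makes $X^\top = X + \ip{X}{\nu}\nu$, and the identification of which vector fields actually lie in $T\Si \cap TM_t$ so that Lemma~\ref{spacederiv} can be invoked.
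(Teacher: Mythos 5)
Your proposal is correct and follows essentially the same route as the paper: differentiate $v=-\ip{\nu}{e_{n+1}}$ to get $\n_\mt v=-A(e_{n+1}^\top,\mt)$, substitute via \eqref{linalg1}, apply Lemma~\ref{spacederiv} with $W=e_{n+1}^\tau$, and use \eqref{linalg2} together with $e_{n+1}$ being a zero eigenvector of $A^\Si$. You merely spell out the first differentiation step slightly more explicitly than the paper does; all signs and coefficients check out.
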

\begin{proof}
 We have that at the boundary
 \[\n_{\mu^\top}v=-A(e_{n+1}^\top,\mt) \ \ .\]
 Using Lemma \ref{spacederiv} and equations (\ref{linalg1}) and (\ref{linalg2}),
 \begin{flalign*}
  \n_\mt v &=A^\Si(e_{n+1}^\tau, \nu^\Si)-\n^\Si_{e_{n+1}^\tau} \alpha+\frac{\a v}{1+\a^2}A(\mt,\mt)\\
  &=-\frac{v}{1+\a^2}A^\Si(\nu^\Si, \nu^\Si)+\frac{\a v}{1+\a^2}A(\mt,\mt)-\n^\Si_{e_{n+1}^\tau} \alpha
 \end{flalign*}
where we also used that $e_{n+1}$ is a zero eigenvector of $A^\Si(\cdot,\cdot)$. 

\end{proof}

\begin{lemma} \label{bdryH}
 At any point in $\Si\cap M_t$, we have
 \[\n_\mt H = \frac{H}{1+\a^2} \left[\a A(\mt,\mt) -A^\Si(\nu^\Si,\nu^\Si) +\n^\Si_{\nu^\Si}\a\right]\]
\end{lemma}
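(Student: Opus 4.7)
Since the right-hand side of Lemma \ref{bdryH} is proportional to $H$, which is the normal speed under MCF, the identity cannot follow from Codazzi and the angle condition alone (those pin down $A(\mu^\top, e_2)$ at the boundary via Lemma \ref{spacederiv}, but $\nabla_{\mu^\top} A(e_1,e_1)$ remains undetermined); the flow itself must enter. My plan is to differentiate the preserved condition $\alpha = \langle \nu, \mu\rangle$ in $t$ along the MCF at a boundary point and solve for $\nabla_{\mu^\top} H$. Writing $\dot{\mathbf{F}} = -H\nu + \xi$ with $\xi \in TM$, the constraint $\dot{\mathbf{F}} \in T\Sigma$ combined with $\langle \mu^\top, \mu\rangle = 1+\alpha^2$ forces $\xi = \frac{H\alpha}{1+\alpha^2}\mu^\top + \eta$ for some $\eta \in TM\cap T\Sigma$ (the tangential reparametrization freedom along $\partial M_t$); using $\mu^\top = (1+\alpha^2)\mu + \alpha \nu^\Sigma$ and $\nu = \alpha\mu + \nu^\Sigma$, this simplifies to $\dot{\mathbf{F}} = -\frac{H}{1+\alpha^2}\nu^\Sigma + \eta$.

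The standard MCF calculation (differentiating $\langle \nu, d\mathbf{F}(X)\rangle = 0$ in $t$) then gives $\dot\nu = -\nabla H + A(\xi,\cdot)^\sharp \in TM$, whence (using $\dot\nu \perp \nu$) $\langle \dot\nu, \mu\rangle = \langle \dot\nu, \mu^\top\rangle = -\nabla_{\mu^\top} H + \frac{H\alpha}{1+\alpha^2}A(\mu^\top, \mu^\top) + A(\eta, \mu^\top)$. Since $\mu$ is defined on $\Sigma$, $\dot\mu = \overline\nabla_{\dot{\mathbf{F}}}\mu$, which yields $\langle \nu, \dot\mu\rangle = A^\Sigma(\dot{\mathbf{F}}, \nu^\Sigma) = -\frac{H}{1+\alpha^2}A^\Sigma(\nu^\Sigma, \nu^\Sigma) + A^\Sigma(\eta, \nu^\Sigma)$; similarly $\dot\alpha = \dot{\mathbf{F}}(\alpha) = -\frac{H}{1+\alpha^2}\nabla^\Sigma_{\nu^\Sigma}\alpha + \nabla^\Sigma_\eta \alpha$. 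Setting $\dot\alpha = \langle\dot\nu,\mu\rangle + \langle\nu,\dot\mu\rangle$ and invoking Lemma \ref{spacederiv} with $W = \eta$ to identify $A(\eta, \mu^\top) + A^\Sigma(\eta, \nu^\Sigma) = \nabla^\Sigma_\eta\alpha$, all three reparametrization-dependent terms cancel --- as they must, since the claim is geometric --- and the remaining identity rearranges precisely to the stated formula.

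The principal obstacle is careful bookkeeping of Lorentzian signs and the correct derivation of $\dot\nu$ in the presence of the tangential term $\xi$; in particular, one must keep track of $\langle\nu^\Sigma, \nu^\Sigma\rangle = -(1+\alpha^2)$ and use the Riemannian musical isomorphism induced on $TM$ for $A(\xi,\cdot)^\sharp$, both of which are easy to get wrong. The main conceptual content is really the cancellation of the reparametrization terms, which is what makes the final formula depend only on $H$ and boundary geometry.
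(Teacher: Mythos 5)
Your proposal is correct and follows essentially the same route as the paper: both proofs differentiate the preserved angle condition $\ip{\nu}{\mu}=\alpha$ in time along the flow restricted to $\Sigma$, use the evolution of $\nu$, Lemma \ref{spacederiv}, and the decomposition of the boundary velocity into a $\mu^\top$-component and a component in $TM\cap T\Sigma$, then solve for $\n_\mt H$. The one genuine difference is that the paper fixes a specific boundary parametrisation (points constrained to move along the $e_{n+1}$-ruling of the cylinder, so $\dot F=\frac{H}{v}e_{n+1}$ and the $A^\Si$-contribution of $\dot F$ vanishes outright), whereas you keep an arbitrary tangential term $\eta\in TM\cap T\Si$ and show its contributions cancel via Lemma \ref{spacederiv}; your version is marginally more general (it never invokes the cylinder structure of $\Sigma$ beyond $\dot F\in T\Sigma$), while the paper's choice, which corresponds to $\eta=\frac{H}{v}e_{n+1}^\tau$, shortens the bookkeeping. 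One small caveat: with the paper's conventions ($\ip{\nu}{\nu}=-1$ and $\ip{\dot{\mb F}}{\nu}=-H$) the normal part of the velocity is $+H\nu$, not $-H\nu$ as you wrote; this is harmless here because every $H$-dependent term in your identity is linear in the velocity, so the final formula is unaffected, but it is exactly the kind of Lorentzian sign you warned yourself about.
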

\begin{proof}
 We consider $x(t)\in M^n$ such that $F(x(t),t)$ is constrained to lie on the line $p+s e_{n+1}\subset \Sigma$ for some $s\in \bb{R}$ and $p\in \partial \Omega$. We see that
 \[\ddt{F(x(t),t)} = H\nu +\frac{H}{v}e_{n+1}^\top\]
 because $\ddt{F(x(t),t)}=\l e_{n+1}$ where $\l \ip{e_{n+1}}{\nu} = -H$. We may now see
 \[\ddt{\nu(x(t),t)} = \n H+\frac{H}{v}\n_{e_{n+1}^\top} \nu\]
 where we used that under the flow, $\pard{\nu}{t}=\n H$ (see \cite[Proposition 3.1]{EckerHuiskenCMCMink}). 
 
 We now see that since $e_{n+1}$ is a zero eigenvector of $A^\Si(\cdot, \cdot)$, 
 \begin{flalign*}
\ddt{}&\ip{\nu(x(t))}{\mu(F(x(t),t)} = \n_\mt H +\frac{H}{v}A(e_{n+1}^\top, \mt)\\
 &= \n_\mt H -\frac{H}{v}A^\Si(e_{n+1}^\tau, \nu^\Si)-\frac{\a H}{1+\a^2}A(\mt, \mt)+\frac{H}{v}\n^\Si_{e_{n+1}^\tau}\a\\
  &= \n_\mt H +\frac{H}{1+\a^2}A^\Si(\nu^\Si, \nu^\Si)-\frac{\a H}{1+\a^2}A(\mt, \mt)+\frac{H}{v}\n^\Si_{e_{n+1}^\tau}\a
 \end{flalign*}
where we used (\ref{linalg1}) and Lemma \ref{spacederiv} to get the second line and (\ref{linalg2}) to obtain the third. 

Since $\ddt{}\a(F(x(t),t)) = \frac{H}{v}\n^\Si_{e_{n+1}} \a$, the Lemma follows from (\ref{linalg2}).
\end{proof}

\section{Gradient estimate for $n=2$}

We include following for completeness (compare alternative graphical notation proof in \cite[Lemma 2.1]{GaoLiWu}): 

\begin{lemma}\label{Hest}
  If the boundary angle $\alpha$ is graphical, that is for all $p\in \Sigma$, $\n^\Si_{e_{n+1}} \a|_p=0$, then for all time such that the flow exists, 
  \[H^2 \leq C_H^2 v^2\]
  where $C_H=\underset{M_0}\sup \frac{|H|}{v}$. 
\end{lemma}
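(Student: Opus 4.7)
The plan is to apply a parabolic maximum principle to the scalar quantity $w = H/v$, which the paragraph preceding (\ref{uest}) identifies with $u_t$ in the graphical parametrisation. Writing the first line of (\ref{parmac}) as $u_t = a^{ij}(Du)D_{ij}u$ with $a^{ij}(p) = \d^{ij} + p_ip_j/(1-|p|^2)$ and differentiating in $t$ produces a linear, homogeneous parabolic equation
\[w_t = a^{ij}(Du)\,D_{ij}w + b^k(Du,D^2u)\,D_k w\]
for $w = u_t$, with $a^{ij}$ uniformly elliptic as long as $|Du|<1$. Because there is no zeroth-order term, the parabolic maximum principle restricts extrema of $w$ over $\ov\Omega\times[0,T)$ to the parabolic boundary, i.e.\ either $t=0$ or $\partial\Omega\times(0,T)$.

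The crux is to eliminate the lateral boundary. Combining Lemmas \ref{bdryH} and \ref{bdryv}, the shared factor $\a A(\mt,\mt) - A^\Si(\ns,\ns)$ cancels in $v\,\n_\mt H - H\,\n_\mt v$, leaving
\[v\,\n_\mt H - H\,\n_\mt v = \frac{vH}{1+\a^2}\n^\Si_\ns\a + H\,\n^\Si_{e_{n+1}^\tau}\a.\]
The graphical hypothesis now enters through (\ref{linalg2}): since $e_{n+1}^\tau = e_{n+1} - \frac{v}{1+\a^2}\ns$ and $\n^\Si_{e_{n+1}}\a = 0$ on $\Si$, we obtain $\n^\Si_{e_{n+1}^\tau}\a = -\frac{v}{1+\a^2}\n^\Si_\ns\a$, and the two remaining terms cancel exactly. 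Hence $\n_\mt(H/v) = 0$ on $\Si\cap M_t$, and this pulls back to a homogeneous oblique Neumann condition for $w$ on $\partial\Omega\times(0,T)$; the obliqueness constant is bounded below by $1+\a^2 > 0$.

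The conclusion then follows from the Hopf boundary point lemma: an extremum of $w$ at an interior point of the lateral boundary would force the oblique derivative to be strictly of one sign, contradicting $\n_\mt(H/v) = 0$. Therefore $\sup_{M_t}(H/v) \leq \sup_{M_0}(H/v)$ and $\inf_{M_t}(H/v) \geq \inf_{M_0}(H/v)$, which gives $|H/v| \leq C_H$ and hence $H^2 \leq C_H^2 v^2$ for all $t\in[0,T)$. The main obstacle, and the step where the graphical assumption is indispensable, is the boundary derivative cancellation: the two structurally distinct $\n^\Si\a$ contributions from Lemmas \ref{bdryH} and \ref{bdryv} only match because of the specific decomposition (\ref{linalg2}) relating $e_{n+1}^\tau$ and $\ns$ in dimension $n=2$.
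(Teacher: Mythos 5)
Your proof is correct and follows essentially the same route as the paper: a maximum principle for $H/v$ (the paper works with $H^2/v^2$ and uses the parametric evolution equations $\ho H=-H|A|^2$, $\ho v=-v|A|^2$ instead of differentiating the graphical PDE in $t$, but these are equivalent formulations), and the decisive step is exactly the same boundary cancellation of $v\n_\mt H-H\n_\mt v$ via Lemmas \ref{bdryH} and \ref{bdryv}, equation (\ref{linalg2}) and the graphical hypothesis. One small correction: that cancellation is pure linear algebra of the projections and holds in every dimension --- the paper explicitly notes in its final section that Lemma \ref{Hest} requires no dimensional restriction --- so your closing assertion that the decomposition (\ref{linalg2}) is special to $n=2$ is mistaken, although it does not affect the validity of your argument.
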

\begin{proof}
 We have the following well known evolution equations (see e.g. \cite[Proposition 2.3, Proposition 2.6]{EckerMinkowskiDBC}\cite[Proposition 3.2, Proposition 3.3]{EckerHuiskenCMCMink})
 \begin{equation}\ho H = -H|A|^2, \ \ \ho v = -v |A|^2\ \ ,\label{Evols} 
 \end{equation}

 and so on the interior of $M_t$,
 \begin{flalign*}
  \ho \frac{H^2}{v^2}& = -\frac{2|\n H|^2}{v^2} -\frac{6 H^2|\n v|^2}{v^4} +8\frac{H}{v^3}\ip{\n v}{\n H}\\
  & =-\ip{\frac{\n H}{H}}{\n \frac{H^2}{v^2}} + 3\ip{\frac{\n v}{v}}{\n \frac{H^2}{v^2}}
 \end{flalign*}
while meanwhile using (\ref{linalg2}), Lemma \ref{bdryv} and Lemma \ref{bdryH}, we see that at the boundary
\begin{flalign*}
 \n_{\mu^\top} \frac{H^2}{v^2} &= 2\frac{H^2}{v^2}\left[\frac{\n^\Si_\ns \a}{1+\a^2} +\frac{\n^\Si_{e_{n+1}^\tau}\a}{v} \right]\\
 &= 2\frac{H^2}{v^3}\n^\Si_{e_{n+1}} \a\\
 &=0 \ \ .
\end{flalign*}
 
Applying the maximum principle gives the result.
\end{proof}

Similarly to in \cite{AltschulerWu}, the restriction to $n=2$ is now used to estimate the difficult $A(\mu^\top, \mu^\top)$ term in Lemma \ref{bdryv} by $H$ and $\n_{e_{n+1}^\tau} v$. This leads to the following gradient estimate:

\begin{proposition}[Gradient estimate in dimension 2]\label{Gradest}
 Suppose that $n=2$, $\partial \Omega$ is strictly convex and the boundary angle is graphical. Then there exists a time independent constant $C$ depending only on $M_0$, $\partial \Omega$, $\alpha$ and $\nabla^\Sigma \alpha$ such that for all time that the flow exists, 
 \[v \leq C\ \ .\]
\end{proposition}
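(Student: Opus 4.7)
The plan is to apply the parabolic maximum principle to $v$ on $M_t \times [0,T)$. From the evolution equation (\ref{Evols}) we have $\ho v = -v|A|^2 \leq 0$, so the spatial supremum of $v$ is controlled as long as it is attained in the interior; all the work is then to handle the case that the spatial maximum is on $\partial M_t$, by showing that if $v$ is sufficiently large there then the Hopf condition $\n_{\mt}v \geq 0$ must fail.

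The obstruction in Lemma \ref{bdryv} is the a priori uncontrolled term $\alpha A(\mt,\mt)$. Here the dimensional restriction $n=2$ enters just as in \cite{AltschulerWu}: a short direct computation shows that $\mt$ and $e_{n+1}^\tau$ are orthogonal in the induced metric on $M_t$, with squared lengths $1+\alpha^2$ and $(v^2-1-\alpha^2)/(1+\alpha^2)$, and since $T_pM_t$ is two-dimensional they form an orthogonal basis. The trace identity therefore expresses $A(\mt,\mt)$ in terms of $H$ and $A(e_{n+1}^\tau,e_{n+1}^\tau)$, and, combined with the cylinder calculation $A^\Si(\ns,\ns) = (v^2-1-\alpha^2)\kappa$, it rewrites $\n_\mt v$ so that the dangerous term is replaced by $H$ (controlled by Lemma \ref{Hest}) together with the diagonal entry $A(e_{n+1}^\tau,e_{n+1}^\tau)$ and, after expanding, the off-diagonal coupling $A(\mt, e_{n+1}^\tau)$.

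To eliminate these two remaining $A$-terms the plan is to combine two identities that hold exactly at a boundary maximum. First, since $\partial M_t$ is a one-dimensional curve with tangent $e_{n+1}^\tau$, maximality of $v$ forces $\n_{e_{n+1}^\tau}v = 0$; unfolding this via $e_{n+1}^\top = e_{n+1}^\tau - \frac{v\alpha}{1+\alpha^2}\mt$ produces a linear relation between $A(e_{n+1}^\tau,e_{n+1}^\tau)$ and $A(\mt, e_{n+1}^\tau)$. Second, Lemma \ref{spacederiv} applied to $W = e_{n+1}^\tau$, together with the fact that $e_{n+1}$ is a zero eigenvector of $A^\Si$ and that $\alpha$ is graphical, pins down $A(\mt,e_{n+1}^\tau)$ in terms of $\n^\Si_{e_{n+1}^\tau}\alpha$ and $v\kappa$. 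Substituting both relations back collapses $\n_\mt v$ to an expression whose leading behaviour is schematically
\[\n_\mt v = v\alpha H - v\kappa(v^2-1) + (\text{bounded correction involving } \n^\Si\alpha).\]

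Lemma \ref{Hest} controls the first term by $O(v^2)$, strict convexity gives $\kappa \geq \kappa_0 > 0$ making the second dominate at order $-\kappa_0 v^3$, and the correction stays bounded for large $v$; hence for $v$ larger than a constant depending only on $M_0,\partial\Omega,\alpha,\n^\Si\alpha$ we obtain $\n_\mt v < 0$, the desired contradiction. The main obstacle in this argument is the off-diagonal term $A(\mt,e_{n+1}^\tau)$, which neither Lemma \ref{Hest} nor the tangential maximum condition kills on its own; the resolution is precisely tangential differentiation of the Neumann condition along the one-dimensional boundary curve, and this is where the three hypotheses $n=2$, graphicality of $\alpha$, and strict convexity of $\partial\Omega$ each enter essentially.
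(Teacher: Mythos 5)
Your proposal follows essentially the same route as the paper's proof: interior control from $\ho v\leq 0$, the two--dimensional trace identity over the orthogonal basis $\{\mu^\top, e_{n+1}^\tau\}$ of $T_pM_t$ combined with Lemma \ref{Hest}, tangential maximality $\n_{e_{n+1}^\tau}v=0$ along the one--dimensional boundary curve, Lemma \ref{spacederiv} to convert the off-diagonal term $A(\mu^\top,e_{n+1}^\tau)$, and the cylinder computation $A^\Sigma(\nu^\Sigma,\nu^\Sigma)=(v^2-1-\alpha^2)\kappa$ producing the dominant $-\kappa v^3$ term. The only imprecision is that the correction terms involving $\n^\Sigma\alpha$ grow like $O(v^2)$ rather than staying bounded, but they are still dominated by the strict-convexity term, so the argument closes exactly as in the paper.
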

\begin{proof}
 We aim to apply the maximum principle to $v$, and in view of equation (\ref{Evols}), all we need to show is that at a large boundary maximum, $\n_{\mu^\top} v \leq 0$. We begin by estimating $A(\mu^\top, \mu^\top)$.
 
 Let $p \in M_t \cap \Sigma$ be a boundary maximum of $v$ such that $v(p)\geq C$ where $C$ is to be chosen later and consider $e_{n+1}^\tau$ at this point. Since $|e_{n+1}^\tau|^2 = \frac{v^2}{1+\a^2}-1$,  we see that by choosing $C>2\underset{\partial \Omega} \sup \sqrt{1+\a^2}$ we may assume $e_{n+1}^\tau \neq 0$. 
  
 We calculate that at $p$,
 \begin{flalign}
  0&= \n_{e_{n+1}^\tau} v \nonumber\\&= A(e^\top, e_{n+1}^\tau)\nonumber\\&= A(e_{n+1}^\tau, e_{n+1}^\tau) -\frac{v\a}{1+\a^2}A(\mu^\top, e_{n+1}^\tau)\nonumber\\
  & = A(e_{n+1}^\tau, e_{n+1}^\tau) +\frac{v\a}{1+\a^2}A^\Si(\nu^\Si, e_{n+1}^\tau)-\frac{v\a}{1+\a^2}\n_{ e_{n+1}^\tau}^\Si\a\nonumber\\
  & = A(e_{n+1}^\tau, e_{n+1}^\tau) -\a A^\Si(e_{n+1}^\tau, e_{n+1}^\tau)-\frac{v\a}{1+\a^2}\n_{ e_{n+1}^\tau}^\Si\a \label{tanderiv}
 \end{flalign}
where we used equation (\ref{linalg1}) on the second line, Lemma \ref{spacederiv} on the third and (\ref{linalg2}) on the fourth. 

 Using (\ref{linalg2}) and the fact that $e_{n+1}$ is a zero eigenvector of $A^\Si(\cdot,\cdot)$, we see that
 \begin{equation}A^\Si(\nu^\Si, \nu^\Si) = A^\Si(\nu -\a \mu- ve_{n+1},\nu -\a \mu- ve_{n+1}) = (v^2 -1 -\a^2)\kappa
\label{ASigma}
 \end{equation}

 and
 \[A^\Si(e_{n+1}^\tau,e_{n+1}^\tau) = \frac{v^2}{(1+\a^2)^2}A^\Si(\nu^\Si,\nu^\Si) =  \frac{v^2(v^2-1-\a^2)}{(1+\a^2)^2}\kappa\ \ .\] 
We define $T = |e_{n+1}^\tau|^{-1}e_{n+1}^\tau = \sqrt{1+\a^2}(v^2-\a^2-1)^{-\frac{1}{2}}e_{n+1}^\tau$ and see
 \begin{equation}A^\Si(T,T) = \frac{v^2}{1+\a^2}\kappa\ \ .
  \label{ASigmaT}
 \end{equation}
Similarly if we define $\underset{V\in T_p\partial \Omega, |V|=1}\sup \ov \n_V \alpha = C_\a$ then we observe that
 \begin{equation}|\n^\Si_{e_{n+1}^\tau} \a| \leq v\sqrt{\frac{v^2-1-\a^2}{1+\a^2}}C_\a \ \ .
  \label{grada}
 \end{equation}

Applying (\ref{ASigmaT}) and (\ref{grada}) to (\ref{tanderiv}) gives
\begin{flalign*} A(T,T) &= \frac{\a v^2}{1+\a^2}\kappa +\frac{\a v}{v^2-1-\a^2}\n^\Si_{e_{n+1}^\tau} \a\\& \geq \frac{\a v^2}{1+\a^2}\kappa -\frac{\a v^2}{\sqrt{(1+\a^2)(v^2-1-\a^2)}}C_\a\ \ .
\end{flalign*}

We may therefore use Lemma \ref{Hest} to estimate
\begin{flalign*}
\frac{1}{1+\a^2}A(\mu^\top,\mu^\top)& = H - A(T,T)\\&\leq C_H v -\kappa \frac{\a v^2}{1+\a^2}+\frac{\a v^2}{\sqrt{(1+\a^2)(v^2-1-\a^2)}}C_\a\ \ ,
\end{flalign*}
which we may now apply along with (\ref{ASigma}) and (\ref{grada}) to estimate the right hand side of the boundary derivative of $v$ in Lemma \ref{bdryv}
\begin{flalign*}
\n_\mt v &\leq v\left[\a C_H v +\frac{\a^2 v^2}{\sqrt{(1+\a^2)(v^2-1-\a^2)}}C_\a -\frac{v^2-1}{1+\a^2}\kappa \right]-\n^\Si_{e_{n+1}^\tau} \alpha\\
&\leq v\left[\a C_H v +\frac{\a^2 v^2}{\sqrt{(1+\a^2)(v^2-1-\a^2)}}C_\a +\sqrt{\frac{v^2-1-\a^2}{1+\a^2}}C_\a-\frac{v^2-1}{1+\a^2}\kappa \right]
\end{flalign*}
which is clearly negative for large enough $v$. In particular, while $v>2\underset{\partial \Omega} \sup \sqrt{1+\a^2}$ we may estimate
\[\n_\mt v \leq \frac{v^2}{1+\a^2}\left[ (1+\a^2)(\a C_H +(2\a^2+1)C_\a+\kappa) - v\kappa \right]\]
and so for $v\geq \kappa^{-1}(1+\a^2)(\a C_H +(2\a^2+1)C_\a+\kappa)$ we see that $\n_\mt v\leq 0$. The Lemma follows from the maximum principle by choosing
\[C =\max\{2 \sqrt{1+\overline{\a}^2}, \ (1+\overline{\a}^2)(\underline{\kappa}^{-1}\overline{\a} C_H +\underline{\kappa}^{-1}(2\overline{\a}^2+1)C_\a+1), \ \underset{M_0}{\sup} \,v \}\ \ ,\]
where $\overline \a = \underset{x \in \partial \Omega} \sup |\alpha|$ and $\underline{\kappa} = \underset{x\in \partial \Omega} \inf \kappa >0$.
\end{proof}

\section{Remarks on $n\geq 3$}

It would be interesting to obtain similar estimates in higher codimension, and we observe that Lemma \ref{Hest} did not require a dimensional restriction. Clearly the proof in the previous section will no longer hold, and so we must find some other way of estimating $A(\mu^\top, \mu^\top)$ in Lemma \ref{bdryv}. 

One possible solution observed by B. Guan \cite{BoGuan} in the Euclidean case, is to use an extension of the boundary condition itself to obtain a bound. We extend $\mu$ smoothly to all of $\bb{R}^{n+1}_1$ so that for all $p\in \bb{R}_1^{n+1}$, $\ov \n_{e_{n+1}} \mu |_p=0$  and for all $q\in\Sigma$, $\ov \n_\mu \mu|_q =0$, and define $\tilde \alpha = \ip{\mu}{\nu}$. We may then observe that at the boundary
\[\n_\mt \tilde{\alpha} = \alpha A^\Si(\nu^\Si,\nu^\Si) + A(\mu^\top, \mu^\top) \ \ ,\]
while we also know that at the boundary $\a=\tilde{\a}$. We may aim to estimate functions such as $v(1+\tilde{\a}^2)^{-\frac{1}{2}}$, which have a negative boundary derivative, as required. However, due to the indefinite metric on the ambient space (as opposed to in definite spaces), the group of isometries fixing a point are noncompact. This implies we must must estimate projections with an extra $v$ term, and several such projections appear in the evolution of $\tilde{\a}$. The evolution of $\tilde{\alpha}$ reads
\begin{flalign*}
 \ho \tilde{\a}&=-\tilde{\a}|A|^2-2h^{ij}\ip{\ov \n_\pard{F}{x^i} \mu}{\pard{F}{x^j}}-g^{ij}\ip{\ov\n^2_{\pard{F}{x^i}\pard{F}{x^j}}\mu}{\nu}
 \end{flalign*}
where, in general, no signs may be obtained on the last two terms. These must therefore be estimated by $C_1|A|v^2$ and $C_2v^3$ respectively, and these large powers of $v$ make estimates in general a challenge, and more than can be dealt with purely from the evolution of $v$.

\bibliographystyle{plain}

\end{document}